\newcommand{\numberseries}{\bfseries}   
\newlength{\thmtopspace}                
\newlength{\thmbotspace}                
\newlength{\thmheadspace}               
\newlength{\thmindent}                  
\newtheoremstyle{fixed bf head,slanted body}
                {\thmtopspace}{\thmbotspace}{\slshape}
                {\thmindent}{\bfseries}{.}{\thmheadspace}
                {{\numberseries \thmnumber{#2\;}}\thmname{#1}\thmnote{ (#3)}}
\newtheoremstyle{variable bf head,slanted body}
                {\thmtopspace}{\thmbotspace}{\slshape}
                {\thmindent}{\bfseries}{.}{\thmheadspace}
                {{\numberseries \thmnumber{#2\;}}\thmname{#1}\thmnote{ #3}}
\newtheoremstyle{fixed bf head,upright body}
                {\thmtopspace}{\thmbotspace}{\upshape}
                {\thmindent}{\bfseries}{.}{\thmheadspace}
                {{\numberseries \thmnumber{#2\;}}\thmname{#1}\thmnote{ (#3)}}
\newtheoremstyle{numbered paragraph}
                {\thmtopspace}{\thmbotspace}{\upshape}
                {\thmindent}{\upshape}{}{\thmheadspace}
                {{\numberseries \thmnumber{#2.}}}
\theoremstyle{definition}
\newtheorem{thm}{Theorem}[section]
\newtheorem{chunk}[thm]{\hspace*{-1.065ex}\bf}
\newtheorem{rmk}[thm]{Remark}
\newtheorem{prop}[thm]{Proposition}
\newtheorem{cor}[thm]{Corollary}
\newtheorem{ques}[thm]{Question}
\newtheorem{prob}[thm]{Problem}
\theoremstyle{definition}
\newtheorem{dfn}[thm]{Definition}
\newtheorem{eg}[thm]{Example}
\theoremstyle{remark}
\def\p{\mathfrak{p}}
\newcommand{\rpm}{\raisebox{.2ex}{$\scriptstyle\pm$}}
\newcommand{\CC}{\mathbb{C}}
\newcommand{\ZZ}{\mathbb{Z}}
\newcommand{\NN}{\mathbb{N}}
 \DeclareMathOperator{\coker}{coker}
\DeclareMathOperator{\Ho}{\operatorname{\mathbb{I}}}
\DeclareMathOperator{\Hdim}{\operatorname{\mathbf{\mathbb{I}}}}
\DeclareMathOperator{\rHdim}{\operatorname{\mathsf{red-\mathbb{I}}}}
\DeclareMathOperator{\rpd}{\operatorname{\mathsf{red-pd}}}
\DeclareMathOperator{\rGdim}{\operatorname{\mathsf{red-G-dim}}}
\DeclareMathOperator{\rCI}{\operatorname{\mathsf{red-CI-dim}}}
\DeclareMathOperator{\pd}{\operatorname{\mathsf{pd}}}
\DeclareMathOperator{\CI}{\operatorname{\mathsf{CI-dim}}}
\DeclareMathOperator{\G}{\operatorname{\mathsf{G-dim}}}
\DeclareMathOperator{\PPP}{\operatorname{P}}
\DeclareMathOperator{\Ext}{\operatorname{\mathsf{Ext}}}
\DeclareMathOperator{\depth}{\operatorname{\mathsf{depth}}}
\DeclareMathOperator{\Hom}{\operatorname{\mathsf{Hom}}}
\def\m{\mathfrak{m}}
\def\urltilda{\kern -.15em\lower .7ex\hbox{\~{}}\kern .04em}\def\urldot{\kern -.10em.\kern -.10em}\def\urlhttp{http\kern -.10em\lower -.1ex\hbox{:}\kern -.12em\lower 0ex\hbox{/}\kern -.18em\lower 0ex\hbox{/}}
\numberwithin{equation}{section}
\begin{document}
\baselineskip=15pt
\title{Reducing invariants and total reflexivity}			
\author{Tokuji Araya}
\address{Department of Applied Science, Faculty of Science, Okayama University of Science, Ridaicho, Kitaku, Okayama 700-0005, Japan.}
\email{araya@das.ous.ac.jp}
\author{Olgur Celikbas}
\address{Department of Mathematics, West Virginia University, 
Morgantown, WV 26506 U.S.A}
\email{olgur.celikbas@math.wvu.edu}

\thanks{2010 {\em Mathematics Subject Classification.} Primary 13D07; Secondary 13C13, 13C14, 13H10}
\keywords{Gorenstein dimension, complexity, reducible complexity, totally reflexivity, vanishing of Ext}

\begin{abstract} Motivated by a recent result of Yoshino, and the work of Bergh on reducible complexity, we introduce reducing versions of invariants of finitely generated modules over commutative Noetherian local rings. Our main result considers modules which have finite reducing Gorenstein dimension, and determines a criterion for such modules to be totally reflexive in terms of the vanishing of Ext. Along the way we give examples and applications, and in particular, prove that a Cohen-Macaulay local ring with canonical module is Gorenstein if and only if the canonical module has finite reducing Gorenstein dimension.
\end{abstract}
\maketitle

\section{Introduction}
Throughout $R$ denotes a commutative Noetherian local ring with unique maximal ideal $\m$ and residue field $k$. Moreover, each $R$-module is assumed to be finitely generated. For standard, unexplained basic terminology and notations, we refer the reader to \cite{AuBr}, \cite{Av2}, \cite{BH} and \cite{Vla}.

An $R$-module $M$ is called \emph{totally reflexive} if $\Ext^i_R(M,R) = 0 = \Ext^i_R(M^{\ast},R)$ for all $i\geq 1$, and $M$ is reflexive, i.e., $M \cong M^{\ast\ast}$, where  $M^{\ast}=\Hom_R(M ,R)$. This definition is valid over each Noetherian ring (not necessarily local) and is due to Auslander and Bridger \cite{AuBr}. The \emph{Gorenstein dimension} $\G_R(M)$ of an $R$-module $M$ is defined in terms of the length of a resolution of totally reflexive modules, and has been studied extensively in the literature. Note that the totally reflexive modules are precisely the nonzero modules of Gorenstein dimension zero. In 2006 Jorgensen and {\c{S}}ega \cite{JS} proved that the conditions defining total reflexivity are independent of each other: one of the examples they constructed is a local Artinian ring $R$, and 
an $R$-module $M$ such that $\Ext^i_R(M,R) = 0 \neq \Ext^i_R(M^{\ast},R)$ for all $i\geq 1$. The work of Jorgensen and {\c{S}}ega also yields a non-reflexive module over $R$ with the same vanishing conditions. Therefore, it seems natural to us to consider the following problem:

\begin{prob} \label{zor} Let $M$ be an $R$-module. Determine conditions on $R$, or on $M$, so that the vanishing of $\Ext^i_R(M,R)$ for all $i\geq 1$ forces $M$ to be totally reflexive.
 \pushQED{\qed} 
\qedhere
\popQED	
\end{prob}

Let us note, in general, we even do not know if the vanishing of $\Ext^i_R(M,R)$ for all $i\geq 1$ forces $M$ to be Cohen-Macaulay. Recently Yoshino \cite{Yuj} extended the stable module theory of Auslander and Bridger \cite{AuBr} to the \emph{stable complex theory} and, by using his theory, proved that, if $R$ is generically Gorenstein and $X$ is an arbitrary complex of finitely generated projective $R$-modules, then $X$ is exact if and only if $X^{\ast}$ is exact. Yoshino's work yielded the following beautiful, far-reaching theorem concerning Problem \ref{zor}.

\begin{thm} (Yoshino \cite{Yuj}) \label{Yos} Let $M$ be an $R$-module. If $R$ is generically Gorenstein (i.e., $R_{\p}$ is Gorenstein for each associated prime ideal $\p$ of $R$), then one has $\G(M)=\sup\{i \in \ZZ: \Ext^{i}_R(M,R)=0\}$.  \pushQED{\qed} 
\qedhere
\popQED	
\end{thm}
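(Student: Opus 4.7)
The plan is to split the claimed equality into two inequalities and treat them separately. The inequality $\G_R(M) \ge \sup\{i : \Ext^i_R(M,R) \ne 0\}$ holds without the generically Gorenstein hypothesis. I would prove it by induction on $\G_R(M)$: the base case $\G_R(M) = 0$ is total reflexivity, which yields $\Ext^i_R(M,R) = 0$ for all $i \ge 1$ by definition; the inductive step uses a short exact sequence $0 \to K \to F \to M \to 0$ with $F$ free and $\G_R(K) = \G_R(M) - 1$, combined with the long exact sequence of $\Ext$.

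For the opposite inequality, suppose $\Ext^i_R(M,R) = 0$ for all $i > n$ and set $N = \syz^n_R M$, so $\Ext^i_R(N,R) = 0$ for every $i \ge 1$. Since $\G_R(M) \le n$ would follow from $N$ being totally reflexive, the task reduces to proving total reflexivity of $N$. I would choose projective resolutions $F_\bullet \to N$ and $G_\bullet \to N^{\ast}$. The vanishing $\Ext^{\ge 1}_R(N,R) = 0$ makes the dualized complex $0 \to N^{\ast} \to F_0^{\ast} \to F_1^{\ast} \to \cdots$ exact, and splicing this with $G_\bullet \to N^{\ast}$ through the surjection $G_0 \twoheadrightarrow N^{\ast}$ produces a doubly infinite exact complex of finitely generated projectives
\[ X \colon \quad \cdots \to G_2 \to G_1 \to G_0 \to F_0^{\ast} \to F_1^{\ast} \to F_2^{\ast} \to \cdots . \]

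Now I would invoke the stable-complex theorem from \cite{Yuj}: since $R$ is generically Gorenstein, $X$ is exact if and only if $X^{\ast}$ is exact; so $X^{\ast}$ is exact. Using $F_i^{\ast\ast} \cong F_i$, exactness of $X^{\ast}$ at the two "middle" positions identifies the canonical biduality map $N \to N^{\ast\ast}$ as an isomorphism, so $N$ is reflexive; exactness of $X^{\ast}$ at each $G_j^{\ast}$ with $j \ge 1$ reads off, via the resolution $G_\bullet$, that $\Ext^j_R(N^{\ast},R) = 0$. Together with the already-known $\Ext^{\ge 1}_R(N,R) = 0$, this shows $N$ is totally reflexive.

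The essential difficulty is packaged into Yoshino's stable-complex theorem, which I would take as a black box; its proof is where the generically Gorenstein hypothesis is genuinely used. The remaining technical care lies in verifying that the isomorphism extracted from the exactness of $X^{\ast}$ at the spot between $F_0^{\ast\ast}$ and $G_0^{\ast}$ is actually the biduality morphism $N \to N^{\ast\ast}$ rather than merely some abstract isomorphism; this is a diagram chase involving the splicing map $G_0 \to F_0^{\ast}$ and its dual $F_0 \to G_0^{\ast}$, using naturality of evaluation.
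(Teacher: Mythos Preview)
The paper does not give its own proof of this statement: Theorem~\ref{Yos} is recorded as a result of Yoshino with the citation \cite{Yuj} and is closed immediately with a \qedsymbol. So there is nothing to compare against on the paper's side.

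Your sketch is nonetheless the natural derivation the paper implicitly has in mind. The paragraph just before Theorem~\ref{Yos} already states the black box you invoke (over a generically Gorenstein ring, a complex $X$ of finitely generated projectives is exact iff $X^{\ast}$ is exact), and your construction of the doubly infinite complex $X$ by splicing a resolution of $N^{\ast}$ with the dualized resolution of $N$ is exactly how one reads total reflexivity off that theorem. The reduction to a syzygy $N=\Omega^n M$ and the easy inequality $\G_R(M)\ge \sup\{i:\Ext^i_R(M,R)\ne 0\}$ are standard. Two small remarks: (i) you silently corrected the displayed formula---as written in the paper the set is $\{i:\Ext^i_R(M,R)=0\}$, which is surely a typo for $\ne 0$, and your reading is the intended one (the proof of Theorem~\ref{mainthmintro} in the paper confirms this interpretation); (ii) the point you flag at the end, that the cokernel computed from exactness of $X^{\ast}$ at the splice really is $N^{\ast\ast}$ and the induced map is the biduality, is genuine but routine---it follows because the map $G_0\to F_0^{\ast}$ you splice along factors as $G_0\twoheadrightarrow N^{\ast}\hookrightarrow F_0^{\ast}$, so its dual factors through $N^{\ast\ast}$ by naturality.
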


The aim of this paper is to consider Problem \ref{zor} and obtain a result in the direction of Theorem \ref{Yos}. Our main result is motivated by, besides Theorem \ref{Yos}, the \emph{reducible complexity} definition of Bergh \cite{Be}: we introduce \emph{reducing} versions of homological invariants (in particular those of homological dimensions) of $R$-modules, and prove the following:

\begin{thm} \label{mainthmintro} Let $M$ be an $R$-module which has finite \emph{reducing Gorenstein dimension}. Then one has $\G(M)=\sup\{i \in \ZZ: \Ext^{i}_R(M,R)=0\}$.  \pushQED{\qed} 
\qedhere
\popQED	
\end{thm}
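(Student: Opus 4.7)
The plan is to induct on the reducing Gorenstein dimension $r = \rGdim M$, in parallel with Bergh's inductive treatment of reducible complexity. Throughout, write $s = \sup\{i \geq 1 : \Ext^i_R(M, R) \neq 0\}$, understood to be $0$ when the set is empty; the content of the theorem is that $\G(M)$ equals $s$, and in particular is finite.

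For the base case $r = 0$, the definition should force $\G(M) < \infty$ outright. Then the conclusion is the classical Auslander--Bridger characterization: when $\G(M)$ is finite, $\Ext^i_R(M, R)$ vanishes exactly for $i > \G(M)$, so $\G(M)$ is the largest index at which this Ext is nonzero. Note that here we do not need the generically Gorenstein hypothesis of Theorem~\ref{Yos}; finiteness of $\G$ is doing all the work.

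For the inductive step with $r \geq 1$, I would unwind the definition to produce a short exact sequence
\begin{equation*}
0 \to K \to L \to N \to 0
\end{equation*}
(possibly after replacing $M$ by a syzygy or a direct sum), in which $L$ is built from $M$, $\G(K) < \infty$, and $\rGdim N \leq r - 1$. Applying $\Hom_R(-,R)$ yields a long exact sequence relating $\Ext^i_R(K,R)$, $\Ext^i_R(L,R)$, and $\Ext^i_R(N,R)$. Since syzygies only shift the Ext-vanishing index, hypothetical vanishing of $\Ext^i_R(M,R)$ in large degree propagates to vanishing of $\Ext^i_R(L,R)$ in large degree, and the base case applied to $K$ gives $\Ext^i_R(K,R) = 0$ for $i \gg 0$. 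The long exact sequence then forces $\Ext^i_R(N,R) = 0$ for $i \gg 0$. The induction hypothesis applied to $N$ yields $\G(N) < \infty$, and then the closure of finite Gorenstein dimension under extensions pushes this back to $\G(L) < \infty$, hence to $\G(M) < \infty$ (since finite G-dimension of a syzygy of $M$ lifts to $M$). Once $\G(M)$ is known to be finite, the base case delivers the precise equality $\G(M) = s$.

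The main obstacle I anticipate is bookkeeping in the inductive step: the indices $a$ by which syzygies have been taken, the direct-sum multiplicities, and the threshold past which $\Ext^i_R(K,R)$ vanishes must all be tracked so that, when fed into the long exact sequence, they produce a clean vanishing statement for $\Ext^i_R(N, R)$ sharp enough to invoke induction. A secondary subtlety is verifying that, once $\G(N)$ is finite, the sequence really does transport this finiteness backwards to $M$ without requiring any auxiliary hypothesis such as generic Gorensteinness, so that the proof succeeds under the sole assumption of finite reducing Gorenstein dimension.
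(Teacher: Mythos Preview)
Your inductive step rests on a short exact sequence
\[
0 \to K \to L \to N \to 0
\]
with $L$ built from $M$, $\G(K)<\infty$, and $\rGdim(N)\le r-1$. No such sequence comes out of the definition. Unwinding a reducing $\G$-sequence $\{K_0,\ldots,K_r\}$ with $K_0=M$ gives
\[
0 \to M^{\oplus a} \to K_1 \to \Omega^{n}M^{\oplus b} \to 0,
\]
so it is the \emph{middle} term $K_1$ that has $\rGdim=r-1$, while \emph{both} outer terms are built from $M$. After the induction hypothesis yields $\G(K_1)<\infty$, you are left with a short exact sequence whose two unknown terms are $M^{\oplus a}$ and $\Omega^nM^{\oplus b}$; two-out-of-three for Gorenstein dimension does not apply, and indeed Example~\ref{ex1} already exhibits $0\to k^{\oplus 4}\to R^{\oplus 2}\to\Omega k\to 0$ with a free middle and both ends of infinite Gorenstein dimension. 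Bergh's argument for reducible complexity, which you cite as the model, does not proceed via two-out-of-three either; it shows $M$ is a direct summand of the final $K_r$, which requires the self-$\Ext$ vanishing hypothesis $\Ext^i_R(M,M)=0$ that you do not have here.

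The paper avoids this obstruction entirely. After passing to a high syzygy so that $\Ext^i_R(M,R)=0$ for all $i\ge 1$, it uses the induction on $r$ only to prove the weaker statement that $M$ is \emph{torsionless}: from the diagram comparing $0\to M^{\oplus a}\to K_1\to\Omega^nM^{\oplus b}\to 0$ with its double dual, torsionlessness of $K_1$ (from the induction hypothesis) forces torsionlessness of $M$. One then takes the pushforward $0\to M\to F\to M_1\to 0$, and the crucial step is Proposition~\ref{prop2}(ii): if $\rGdim(M)<\infty$, $\Ext^{\ge 1}_R(M,R)=0$, and $M\cong\Omega M_1$, then $\rGdim(M_1)<\infty$. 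This lets the pushforward be iterated indefinitely, producing a complete resolution and hence total reflexivity. The substance of the proof lives in that lifting lemma (proved in Section~5 via pushout/pullback manipulations), not in any closure property of finite $\G$-dimension under extensions.
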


In general, a module can have finite reducing Gorenstein dimension, even if it has infinite Gorenstein dimension: we give and discuss such examples, as well as the definition of reducing homological dimensions, in Section 2. We prove Theorem \ref{mainthmintro} in section 4, but defer the proofs of several preliminary results to Section 5. Moreover, in Section 3, we give an application on testing the Gorenstein property in terms of the reducing Gorenstein dimension and prove the following; see Theorem \ref{t2} and Corollary \ref{面白い}.

\begin{prop} \label{propintro} Let $M$ be an $R$-module which has finite reducing Gorenstein dimension. Then:
\begin{equation}\notag{}
\Ext^{i}_R(M,M)=0  \text{ for all } i\gg 0 \Longrightarrow \Ext^{i}_R(M,R)=0  \text{ for all } i\gg 0 \Longleftrightarrow \G_R(M)<\infty.  
\end{equation}
In particular, if $R$ is Cohen-Macaulay with canonical module $\omega$, then $R$ is Gorenstein if and only if $\omega$ has finite reducing Gorenstein dimension if and only if $\omega$ has finite Gorenstein dimension.
\pushQED{\qed} 
\qedhere
\popQED	
\end{prop}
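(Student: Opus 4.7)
The plan is to reduce the displayed chain of implications to Theorem \ref{mainthmintro}, then handle the remaining work by induction on a reducing sequence for $M$; the canonical-module application is then obtained by combining the main part with classical semidualizing-module theory.

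The rightmost equivalence $\Ext^i_R(M,R)=0$ for all $i\gg 0 \Longleftrightarrow \G_R(M)<\infty$ is immediate from Theorem \ref{mainthmintro}, which identifies $\G_R(M)$ with the largest $i$ for which $\Ext^i_R(M,R)$ is nonzero. In view of this, the substantial content of the chain is the implication $\Ext^i_R(M,M)=0$ for $i\gg 0 \Longrightarrow \G_R(M)<\infty$. For this I would induct on the length of a reducing sequence realizing the finiteness of the reducing Gorenstein dimension of $M$, with base case when $\G_R(M)$ is already finite. In the inductive step, the definition of reducing Gorenstein dimension (Section 2) supplies a short exact sequence $0 \to K \to N \to M \to 0$ in which $N$ is a direct sum of (shifts of) syzygies of $M$, and $K$ admits a strictly shorter reducing sequence. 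Since $\Ext^i_R(M,M)$ vanishes for $i\gg 0$ and $N$ is built from shifts of $M$, each of $\Ext^i_R(N,M)$, $\Ext^i_R(M,N)$ and $\Ext^i_R(N,N)$ decomposes as a finite direct sum of high-degree copies of $\Ext^{\geq i}_R(M,M)$, hence vanishes for $i\gg 0$. Chasing through the long exact Ext sequences arising from the displayed short exact sequence then forces $\Ext^i_R(K,K)=0$ for $i\gg 0$. The inductive hypothesis gives $\G_R(K)<\infty$, and a horseshoe-type propagation through the same short exact sequence (using that $N$ has finite Gorenstein dimension whenever $M$ does) yields $\G_R(M)<\infty$.

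For the ``In particular'' statement, the chain $R \text{ Gorenstein} \Rightarrow \omega\cong R \Rightarrow \G_R(\omega)=0 \Rightarrow \omega$ has finite reducing Gorenstein dimension is tautological. For the remaining direction, I use that the canonical module $\omega$ of a Cohen-Macaulay local ring is semidualizing; in particular $\Ext^i_R(\omega,\omega)=0$ for all $i\geq 1$. Applying the middle implication of the proposition with $M=\omega$ then yields $\G_R(\omega)<\infty$, and the classical fact that a semidualizing module of finite Gorenstein dimension is isomorphic to $R$ forces $\omega\cong R$ and hence $R$ Gorenstein.

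The technically delicate point, which I expect to be the main obstacle, is the inductive step above: carrying out the Ext-propagation carefully so that vanishing of $\Ext^i_R(M,M)$ really does translate into $\Ext^i_R(K,K)=0$ for $i\gg 0$, and symmetrically that $\G_R(K)<\infty$ forces $\G_R(M)<\infty$ through the same short exact sequence. The specific design of the reducing sequence, with $N$ a sum of syzygies of $M$ in imitation of Bergh's reducible complexity, is tailored precisely to make both propagations work.
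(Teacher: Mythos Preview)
There is a genuine gap stemming from a misreading of Definition~\ref{maindfn}. The short exact sequence furnished by a reducing $\G$-sequence $\{K_0=M,K_1,\ldots,K_r\}$ has the form
\[
0 \longrightarrow M^{\oplus a_1} \longrightarrow K_1 \longrightarrow \Omega^{n_1} M^{\oplus b_1} \longrightarrow 0,
\]
so it is the \emph{middle} term $K_1$ that carries the strictly shorter reducing sequence, while \emph{both} outer terms are built from $M$ and its syzygies. Your sequence $0\to K\to N\to M\to 0$, with $K$ carrying the shorter sequence and $N$ a sum of syzygies of $M$, never appears.

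Even after correcting the shape of the sequence, your propagation scheme is circular. From the sequence above one does get $\Ext^i_R(K_1,M)=0$ for $i\gg0$, but deducing $\Ext^i_R(K_1,K_1)=0$ requires controlling $\Ext^i_R(K_1,\Omega^{n_1}M)$; passing from $M$ to $\Omega^{n_1}M$ in the second variable produces terms $\Ext^i_R(K_1,R)$, and via the same sequence these vanish for $i\gg0$ if and only if $\Ext^i_R(M,R)=0$ for $i\gg0$, which is exactly the conclusion you are after. The final step is circular for the same reason: once $\G_R(K_1)<\infty$, the two-out-of-three property for $\G$ on the displayed sequence does not force $\G_R(M)<\infty$, because the two unknown terms are $M^{\oplus a_1}$ and $\Omega^{n_1}M^{\oplus b_1}$, both governed by $\G_R(M)$.

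The paper sidesteps all of this with a different mechanism (Theorem~\ref{t2}): assuming $\Ext^i_R(M,M)=0$ for all $i\ge1$, a pushout argument, using only the (easily obtained) vanishing of $\Ext^j_R(K_{i-1},M)$, shows inductively that $M$ is a \emph{direct summand} of every $K_i$. Since $\G$ is closed under summands and $\G_R(K_r)<\infty$, one concludes $\G_R(M)<\infty$ without ever needing $\Ext^i_R(K_1,K_1)$ to vanish. Your treatment of the canonical-module application is fine and agrees with Corollary~\ref{面白い}.
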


The conclusion of Proposition \ref{propintro} is already known if $M$ has reducible complexity: in fact, in this case, $M$ would have finite projective dimension; see \cite[3.2]{Be}. Let us mention here that, if a module has reducible complexity, then it has finite reducing projective, and hence finite reducing Gorenstein dimension; see Definition \ref{maindfn} and \cite[2.1]{Be}. However, it is easy to find examples of modules that do not have finite complexity (and hence do not have reducible complexity), but have finite reducing projective dimension: in Example \ref{ex1}, $R$ is not a complete intersection so that the complexity of $k$ is not finite.

\section{Definitions and examples}

In the following, $\Hdim$ denotes a \emph{homological invariant} of $R$-modules, i.e., $\Hdim$ denotes a map from the set of isomorphism classes of $R$-modules to the set $\ZZ \cup \{ \rpm \infty \}$.
Classical and well-known examples of such an invariant are homological dimensions including the projective dimension $\Hdim=\pd$ \cite{BH}, Gorenstein dimension $\Hdim=\G$ \cite{AuBr}, and complete intersection dimension $\Hdim=\CI$ \cite{AGP}. Motivated by the \emph{reducible complexity} definition of Bergh \cite{Be}, we define:

\begin{dfn} \label{maindfn} Let $M$ be an $R$-module, and let $\Hdim$ be a \emph{homological invariant} of $R$-modules.

We write $\rHdim(M)<\infty$ provided that there exists a sequence of $R$-modules $K_0,  \ldots, K_r$, positive integers $a_1, \dots, a_r,b_1, \dots, b_r ,n_1, \dots, n_r$, and short exact sequences of the form $0 \to K_{i-1}^{{\oplus a_i}} \to K_{i} \to \Omega^{n_i}K_{i-1}^{{\oplus b_i}} \to 0$ for each $i=1, \ldots r$, where $K_0=M$ and $\Hdim(K_r)<\infty$. If such a sequence of modules exists, then we call $\{K_0,  \ldots, K_r\}$ a \emph{reducing $\Hdim$-sequence} of $M$. 

The \emph{reducible invariant} $\Hdim$ of $M$ is defined as follows:
\begin{equation}\notag{}
\rHdim(M)=\inf\{ r\in \NN \cup \{0\}: \text{there is a reducing $\Hdim$-sequence }  K_0,  \ldots, K_r \text{ of }  M\}.
\end{equation}
We set, $\rHdim(M)=0$ if and only if $\Hdim(M)<\infty$. \pushQED{\qed} 
\qedhere
\popQED	
\end{dfn}

In this paper we will focus on reducing homological dimensions, especially on reducing Gorenstein dimension. If $M$ is an $R$-module that has reducible complexity (e.g., if $\CI_R(M)<\infty$), then $M$ has finite reducing projective dimension. In particular, if $R$ is a complete intersection, then each $R$-module has finite reducible projective dimension; see \cite[2.2]{Be}. We suspect that the converse of this fact is also true. Hence it seems reasonable to ask:

\begin{ques} If each $R$-module has finite reducing projective dimension, then must $R$ be a complete intersection ring? What if each $R$-module has reducing projective dimension at most one?
\pushQED{\qed} 
\qedhere
\popQED	
\end{ques}

The reducing homological invariant of an $R$-module $M$ can be finite, even if the corresponding homological invariant is infinite, i.e., in general, reducing homological dimensions are finer invariants than their corresponding homological dimensions. Next we give several examples and remarks to highlight this point. The following is an example of a module that has infinite Gorenstein, but has finite reducing Gorenstein dimension: 

\begin{eg} \label{ex1} Let $R=\CC[\![x,y]\!]/(x,y)^2$. Then we have that $\G(k)=\pd(k)=\infty$, and the minimal free resolution of $k$ is given by:
$$\xymatrix@C=1.4cm{ 
\cdots \ar[r] & R^{\oplus 4} \ar[r]^{\left(\begin{smallmatrix}
x & y & 0 & 0 \\
0 & 0 & x & y
\end{smallmatrix}\right)} & R^{\oplus 2} \ar[r]^{\left(\begin{smallmatrix}
x & y 
\end{smallmatrix}\right)} & R \ar[r] & k \ar[r] & 0.}$$
Since $\Omega^2k \cong k^{\oplus 4}$, the following short sequence is exact: $0 \to k^{\oplus 4} \to R^{\oplus 2} \to \Omega k \to 0$. 
This yields that $\{ k, R^{\oplus 2} \}$ is a reducing $\pd$-sequence (and so reducing $\G$-sequence) of $k$. Therefore, we conclude that $\rpd(k) =\rGdim(k) =1<\infty$.\pushQED{\qed} 
\qedhere
\popQED	
\end{eg}

The next remark and Proposition \ref{prop1} establish a generalization of Example \ref{ex1}.

\begin{rmk}\label{rmk2} Assume $R$ is not Gorenstein and $\m^2=0$. It is easy to see that there is a short exact sequence $0 \to k^{\oplus e^2} \to R^{\oplus e} \to \Omega k \to 0$, where $e$ is an embedding dimension of $R$. This shows that $\{ k, R^{\oplus e} \}$ is a reducing $\pd$-sequence (and hence reducing $\G$) sequence of $k$. Consequently, we see $\rpd(k)=1=\rGdim(k) $, while $\pd(k) =\infty=\G(k)$. \pushQED{\qed} 
\qedhere
\popQED	
\end{rmk}

\begin{prop}\label{prop1}
The following are equivalent, if $R$ is not Gorenstein, $\m^2=0$ and $M$ is an $R$-module. 
\begin{enumerate}[\rm(i)]
\item $M \cong R^{\oplus \alpha}\oplus k^{\oplus \beta}$ for some $\alpha, \beta\geq 0$.
\item $\rpd(M)<\infty$.
\item $\rpd(M) \leq 1$.
\item $\rCI(M)<\infty$.
\item $\rCI(M) \leq 1$.
\item $\rGdim(M) < \infty$.
\item $\rGdim(M) \leq 1$.
\end{enumerate}
\end{prop}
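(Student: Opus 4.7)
The strategy is to close the cycle $(i)\Rightarrow(iii)\Rightarrow(v)\Rightarrow(vii)\Rightarrow(vi)\Rightarrow(i)$ together with $(iii)\Rightarrow(ii)\Rightarrow(iv)\Rightarrow(vi)$. All implications other than $(i)\Rightarrow(iii)$ and $(vi)\Rightarrow(i)$ are immediate, because $\pd<\infty\Rightarrow\CI<\infty\Rightarrow\G<\infty$ means that any reducing $\pd$-sequence is automatically a reducing $\CI$- and $\G$-sequence of the same length.

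For $(i)\Rightarrow(iii)$, write $M=R^{\oplus\alpha}\oplus k^{\oplus\beta}$ and direct-sum $\beta$ copies of the short exact sequence $0\to k^{\oplus e^2}\to R^{\oplus e}\to\Omega k\to 0$ of Remark \ref{rmk2} with $e^2$ copies of the split sequence $0\to R^{\oplus\alpha}\xrightarrow{\mathrm{id}}R^{\oplus\alpha}\to 0\to 0$. The resulting sequence $0\to M^{\oplus e^2}\to R^{\oplus(e\beta+\alpha e^2)}\to\Omega M\to 0$ exhibits $\{M,\,R^{\oplus(e\beta+\alpha e^2)}\}$ as a reducing $\pd$-sequence of length one.

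For $(vi)\Rightarrow(i)$, I would induct backwards along a reducing $\G$-sequence $K_0=M,K_1,\ldots,K_r$ with $\G(K_r)<\infty$. The base case is that $K_r$ is free: since $R$ is Artinian, the Auslander--Bridger formula forces $\G(K_r)=0$, so $K_r$ and hence its syzygy $\Omega K_r$ is totally reflexive. But $\m^2=0$ makes $\Omega K_r\subseteq\m F_0$ a $k$-vector space, so if it were nonzero then $k$ would split off as a totally reflexive direct summand, forcing $\Ext^i_R(k,R)=0$ for all $i\geq 1$ and hence $R$ to be Gorenstein --- contrary to hypothesis. Therefore $\Omega K_r=0$ and $K_r$ is free.

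The inductive step rests on the following lemma, which I anticipate as the main technical obstacle: every submodule $N$ of a module $P$ of form (i) is itself of form (i). Since $k\hookrightarrow R$ via any nonzero element of $\Soc(R)=\m$, one has $P\hookrightarrow R^{\oplus(\alpha+\beta)}$, reducing to $N\subseteq R^{\oplus n}$. Set $N_0 := N\cap\m R^{\oplus n}$; then $\m\cdot N_0\subseteq\m^2 R^{\oplus n}=0$ shows $N_0$ is a $k$-vector space, and $N/N_0\hookrightarrow R^{\oplus n}/\m R^{\oplus n}=k^{\oplus n}$ is also a $k$-vector space, of some dimension $\delta$. Lifting a $k$-basis to $g_1,\ldots,g_\delta\in N$, their reductions in $k^{\oplus n}$ are $k$-linearly independent; extending to a basis of $k^{\oplus n}$ and invoking Nakayama's lemma lifts $g_1,\ldots,g_\delta$ to part of a free basis of $R^{\oplus n}$, so $G := Rg_1+\cdots+Rg_\delta\cong R^{\oplus\delta}$ is free. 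Choosing a $k$-linear complement $N_0'$ to $G\cap N_0$ inside $N_0$ yields $N = G\oplus N_0' \cong R^{\oplus\delta}\oplus k^{\oplus\gamma}$. With the lemma in hand, the short exact sequence $0\to K_{i-1}^{\oplus a_i}\to K_i\to\Omega^{n_i}K_{i-1}^{\oplus b_i}\to 0$ realizes $K_{i-1}^{\oplus a_i}$ as a submodule of $K_i$, so if $K_i$ has form (i) then so does $K_{i-1}^{\oplus a_i}$; Krull--Schmidt (valid over the Artinian local ring $R$) then forces the indecomposable summands of $K_{i-1}$ to lie in $\{R,k\}$, and descending from $K_r$ to $K_0=M$ completes the proof.
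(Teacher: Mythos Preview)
Your proof is correct and follows the same essential line as the paper's. Both arguments reduce to the two nontrivial implications $(i)\Rightarrow(iii)$ and $(vi)\Rightarrow(i)$, and your construction for $(i)\Rightarrow(iii)$ is exactly the paper's (the paper presents it as a commutative diagram, you as a direct sum of sequences).

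For $(vi)\Rightarrow(i)$ the core idea is again the same---use the injections in the reducing $\G$-sequence to embed $M$ into something free, then exploit $\m^2=0$ to see that a submodule of a free module has the form $R^{\oplus\alpha}\oplus k^{\oplus\beta}$---but the paper organizes it more directly. Rather than descending inductively through the $K_i$ and invoking Krull--Schmidt at each step, the paper simply composes all the injections $K_0^{\oplus a_1}\hookrightarrow K_1$, $K_1^{\oplus a_2}\hookrightarrow K_2$, \ldots\ to obtain a single injection $\varphi\colon M\hookrightarrow K_r$ (where $K_r$ is free by \cite[2.4]{YYGdim}, for which you supply a self-contained argument). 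Then $M\cong \Omega(\coker\varphi)\oplus R^{\oplus\alpha}$, and $\Omega(\coker\varphi)\subseteq\m F$ is a $k$-vector space. This bypasses both your submodule lemma in full generality and the appeal to Krull--Schmidt, though what your lemma proves is of course the same fact in slightly different packaging.
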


\begin{proof} Note that, by definitions, it suffices to prove (i) implies (iii), and (vi) implies (i).

Assume (i) holds. Then there is an exact sequence of the form $0 \to k^{\oplus e^2} \to R^{\oplus e} \to \Omega k \to 0$; see Remark \ref{rmk2}. This induces the following commutative diagram with exact rows:
$$\xymatrix{
0 \ar[r] & R^{\oplus e^2\alpha} \oplus k^{\oplus e^2\beta} \ar[r] \ar@{=}[d] & R^{\oplus e^2\alpha} \oplus R^{\oplus e\beta} \ar[r] \ar@{=}[d] & \Omega k^{\oplus \beta} \ar[r] \ar@{=}[d] & 0\\
0 \ar[r] & (R^{\oplus \alpha} \oplus k^{\oplus \beta})^{\oplus e^2} \ar[r] & R^{\oplus (e^2\alpha + e\beta)} \ar[r] & \Omega (R^{\oplus \alpha} \oplus k^{\oplus \beta}) \ar[r]& 0.
}$$
Therefore $\{ R^{\oplus \alpha} \oplus k^{\oplus \beta}, R^{\oplus (e^2\alpha + e\beta)}\}$ is a reducing $\pd$-sequence of $M \cong R^{\oplus \alpha} \oplus k^{\oplus \beta}$. Hence we see that $\rpd (M) \leq 1$. This establishes (iii).

Next we will show that (vi) implies (i). Assume $\rGdim(M) < \infty$. Then there exists a reducing $\G$ sequence, say $\{ K_0,K_1,\dots,K_r\}$, of $M$.
Note that, by definition, there are positive integers $a_1,a_2,\dots,a_r$ and injective maps $K_{i-1}^{\oplus a_i} \to K_i$ for each $i=1, \ldots r$. Hence, setting $K_0=M$, we obtain an injective map $\varphi:M \to K_r$. On the other hand, since $\G(K_r)<\infty$, we conclude that $K_r$ is a free $R$-module; see, for example, \cite[2.4]{YYGdim}. Therefore, $M \cong \Omega(\coker \varphi) \oplus R^{\oplus \alpha}$ for some $\alpha\geq 0$. Since 
$\Omega(\coker \varphi)$ is a $k$-vector space, we have that $M \cong R^{\oplus \alpha} \oplus k^{\oplus \beta}$ for some $\alpha, \beta \geq 0$.
\end{proof}

The ring in Example \ref{ex1} is zero-dimensional. We now proceed to give a higher dimensional example of a ring over which Gorenstein and reducing Gorenstein dimensions are different. Recall that $R$ is said to be \emph{G-regular} \cite{TakG} provided that there are no non-free totally reflexive $R$-modules.

\begin{rmk} \label{rmkEx2} Assume $R$ is G-regular and Cohen-Macaulay with canonical module $\omega$. Assume further there exists a nonzero $R$-module $M$ such that $R$, $\omega$, and $M$ are the only, up to isomorphism, pairwise non-isomorphic, indecomposable maximal Cohen-Macaulay $R$-modules. Then $\Omega M$ is a maximal Cohen-Macaulay $R$-module that has no free summand; see, for example, \cite[9.14(i)]{BLW}. 

Now suppose $\Omega M \cong M^{\oplus r} \oplus \omega^{\oplus s}$ for some $r, s \geq 0$. Pick a maximal $R$-regular sequence $\underline{x}$ (this is empty set if $\depth(R)=0$). Without loss of generality, we may assume $\underline{x}=\{x\}$. Then there is an exact sequence $0 \to \Omega M \to G \to M \to 0$ for some free $R$-module $G$ so that $0 \to \Omega M/x \Omega M \to G/xG$ is exact. Notice $\Omega M/x \Omega M \cong (M/xM)^{\oplus r} \oplus (\omega/x\omega)^{\oplus s}$.  Hence, if $s\geq 1$, then there is an injection $\omega/x\omega \to G/xG$ over the Artinian ring $R/xR$. This implies $\omega/x\omega$ is free over $R/xR$, and so $\omega \cong R$. Thus $s=0$, i.e., $\omega$ cannot be a direct summand of $\Omega M$. Therefore, $r\geq 1$ and $\Omega M \cong M^{\oplus r}$. This yields a short exact sequence of the form $0 \to M^{\oplus r^2} \to F \to \Omega M \to 0$, where $F$ is a free $R$-module. As $M$ is not free, we have $\G(M)=\infty$ so that $\rGdim_R(M)=1<\infty=\G_R(M)$; see Definition \ref{maindfn}. \pushQED{\qed} 
\qedhere
\popQED	
\end{rmk}

\begin{eg} \label{withoutrmkEx2} Let $S=\CC[\![x, y]\!]$ be the formal power series ring, and let $R=\CC[\![x^3, x^2y, xy^2, y^3 ]\!]$ be the $3$rd \emph{Veronese subring} of $S$. Then $S=R \oplus M \oplus \omega$, where $\omega=(x,y)$ is the canonical module of $R$, and $M=(x^2,xy, y^2)$. The set of all  indecomposable maximal Cohen-Macaulay $R$-modules equals the set of all indecomposable $R$-direct summands of $S$, which is $\{R, \omega, M\}$; see \cite[10.5]{Yo}. As $R$ is G-regular, we conclude by Remark \ref{rmkEx2} that $\rGdim_R(M)=1<\infty=\G_R(M)$; see \cite[6.3.6]{BLW} and \cite[5.1]{TakG}. In fact, one can check that $\Omega \omega \cong M$, $\Omega M \cong M^{\oplus 2}$, and there are exact sequences $0 \to \Omega M  \to R^{\oplus 3} \to M \to 0$ and $0 \to M^{\oplus 4} \to F \to \Omega M \to 0$, where $F$ is free.  \pushQED{\qed} 
\qedhere
\popQED	
\end{eg}

\section{An application on testing the Gorenstein property}

It is known that, if $R$ is Cohen-Macaulay with canonical module $\omega$, then $R$ is Gorenstein if and only if $\G_R(\omega)<\infty$; see \cite[1.2]{JLG}. Prior to giving a proof of Theorem  \ref{mainthmintro}, to faciliate further discussion, we give an application of reducing Gorenstein dimension and extend the aforementioned fact about canonical modules. More precisely, we will prove that, if $C$ is a semidualizing $R$-module (see, for example, \cite{SSC}), then $\rGdim(C)<\infty$ if and only if $\G_R(C)<\infty$, i.e., $C$ is totally reflexive. This shows that, if $R$ is Cohen-Macaulay with canonical module $\omega$, then $R$ is Gorenstein if and only if $\rGdim(\omega)<\infty$.

Let $\Hdim$ be a homological invariant of $R$-modules. We call $\Hdim$ \emph{closed under direct summands} provided that the following condition holds: whenever $X$ and $Y$ are $R$-modules, where $X$ is a direct summand of $Y$ and $\Hdim(Y)<\infty$, we have that $\Hdim(X)<\infty$. Next is the main result of this section:

\begin{thm} \label{t2} Let $\Hdim$ be a homological invariant of modules, and let $M$ be an $R$-module such that $\Ext_R^{i}(M,M)=0$ for all $i\geq 1$. Assume $\rHdim(M)<\infty$ with a reducing $\Ho$-sequence $\{K_0,  \ldots, K_r\}$. Then $M$ is a direct summand of $K_{i}$, and $\Ext_R^j(K_i, M)=0$ for each $i=0, \ldots, r$ and for all $j\geq 1$. In particular, if $\Hdim$ is closed under direct summands, then $\Hdim(M)<\infty$.
\end{thm}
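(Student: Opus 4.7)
The plan is to prove both assertions simultaneously by induction on $i \in \{0, 1, \ldots, r\}$. The base case $i = 0$ is immediate because $K_0 = M$: the module $M$ is a direct summand of itself, and $\Ext^j_R(M, M) = 0$ for all $j \geq 1$ by hypothesis.

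For the inductive step, assume the conclusion holds for $K_{i-1}$, and apply $\Hom_R(-, M)$ to the defining short exact sequence
\begin{equation*}
0 \to K_{i-1}^{\oplus a_i} \to K_i \to \Omega^{n_i} K_{i-1}^{\oplus b_i} \to 0.
\end{equation*}
Because $n_i \geq 1$, the standard dimension-shifting isomorphism gives
\begin{equation*}
\Ext^j_R\bigl(\Omega^{n_i} K_{i-1}^{\oplus b_i},\, M\bigr) \;\cong\; \Ext^{j + n_i}_R\bigl(K_{i-1}^{\oplus b_i},\, M\bigr) \;=\; 0
\end{equation*}
for every $j \geq 1$ (the vanishing is by the inductive hypothesis applied to $K_{i-1}$), while $\Ext^j_R(K_{i-1}^{\oplus a_i}, M) = 0$ for $j \geq 1$ directly by induction. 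Reading off the associated long exact sequence, $\Ext^j_R(K_i, M)$ is sandwiched between two zeros and must itself vanish for every $j \geq 1$; this finishes the Ext-vanishing half of the induction.

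To show $M$ is a direct summand of $K_i$, I would look at the low-degree part of the same long exact sequence:
\begin{equation*}
\Hom_R(K_i, M) \;\longrightarrow\; \Hom_R(K_{i-1}^{\oplus a_i}, M) \;\longrightarrow\; \Ext^1_R\bigl(\Omega^{n_i} K_{i-1}^{\oplus b_i}, M\bigr) \;=\; 0,
\end{equation*}
so every homomorphism $K_{i-1}^{\oplus a_i} \to M$ lifts to $K_i$. By the inductive hypothesis, $M$ is a direct summand of $K_{i-1}$, and hence of $K_{i-1}^{\oplus a_i}$; let $\pi : K_{i-1}^{\oplus a_i} \to M$ be a retraction onto the first summand $M$. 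Any lift $\tilde{\pi} : K_i \to M$ of $\pi$ then splits the composite inclusion $\iota : M \hookrightarrow K_{i-1}^{\oplus a_i} \hookrightarrow K_i$, since on $M$ we have $\tilde{\pi} \circ \iota = \pi \circ \iota = \id_M$. This exhibits $M$ as a direct summand of $K_i$ and closes the induction.

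Running the induction up to $i = r$ makes $M$ a direct summand of $K_r$; if $\Hdim$ is closed under direct summands, this transfers the finiteness $\Hdim(K_r) < \infty$ to $\Hdim(M) < \infty$, giving the final assertion. The only real technicality in the argument is keeping track of the index shift $\Ext^j(\Omega^{n_i} -, M) \cong \Ext^{j + n_i}(-, M)$, which, together with the hypothesis $n_i \geq 1$, is precisely what forces the relevant Ext groups in the long exact sequence to vanish from $j = 1$ onward; beyond that bookkeeping there is no deeper obstacle.
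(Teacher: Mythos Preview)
Your proof is correct and follows the same inductive skeleton as the paper: establish $\Ext^j_R(K_i,M)=0$ from the long exact sequence, then deduce that $M$ splits off $K_i$. The one difference is in the splitting step. The paper forms the pushout of $K_{i-1}^{\oplus a_i}\to K_i$ along the projection $K_{i-1}^{\oplus a_i}\twoheadrightarrow L_{i-1}^{\oplus a_i}$ (where $K_{i-1}\cong M\oplus L_{i-1}$), obtains a quotient $L_i'$, checks $\Ext^1_R(L_i',M)=0$, and concludes that the resulting sequence $0\to M^{\oplus a_i}\to K_i\to L_i'\to 0$ splits. You instead read off directly from the long exact sequence that $\Hom_R(K_i,M)\to \Hom_R(K_{i-1}^{\oplus a_i},M)$ is surjective and lift a retraction. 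Your route is shorter and avoids the auxiliary pushout diagram; the paper's route, in exchange, identifies the complement explicitly (in fact it shows $M^{\oplus a_i}$, not just $M$, splits off $K_i$), though that extra information is not needed for the theorem as stated.
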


\begin{proof} It is straightforward to derive the required conclusion of the second claim provided that the first one is correct: $M$ is a direct summand of $K_r$  so that $\Hdim(M)< \infty$ since $\Hdim(K_r)<\infty$. Hence we will prove the first claim by induction on $i$. In order to prove $M$ is a direct summand of $K_{i}$, we will show that there exists an $R$-module $L_i$ such that $K_i \cong M \oplus L_i$ for each $i$.

If $i=0$, then the claim follows by setting $L_i=0$ (recall $K_0=M$). Hence we assume $i\geq 1$. Then, by the induction hypothesis, $\Ext^{j}_R(K_{i-1},M)=0$ for all $j\geq 1$, and there exists an $R$-module $L_{i-1}$ such that $K_{i-1} \cong M \oplus L_{i-1}$. So, by applying $\Hom_R(-,M)$ to the exact sequence $0 \to K_{i-1}^{\oplus a_i} \overset{\varphi}{\to} K_i \to \Omega^{n_i}K_{i-1}^{\oplus b_i} \to 0$, we see that $\Ext^{j}_R(K_i,M)=0$ for all $j\geq 1$. Now we consider a pushout diagram of the map $\varphi$ and the split epimorphism $K_{i-1}^{\oplus a_i} \twoheadrightarrow L_{i-1}^{\oplus a_i}$.

$$\xymatrix{
& 0 \ar[d] & 0 \ar[d] \\
& M^{\oplus a_i}\ar[d] \ar@{=}[r] & M^{\oplus a_i} \ar[d] \\
 0 \ar[r] & K_{i-1}^{\oplus a_i} 
\ar[d]^{} \ar[r]^{} & K_i
\ar[d]^{} \ar[r]^{} &
\Omega^{n_i}K_{i-1}^{\oplus b_i}\ar@{=}[d] \ar[r] & 0 \\
0 \ar[r] & L_{i-1}^{\oplus a_i} \ar[r] \ar[d] & L_i' \ar[r] \ar[d] &
\Omega^{n_i}K_{i-1}^{\oplus b_i} \ar[r] & 0 \\
& 0 & 0 }$$
\vspace{0.01in}

As $K_{i-1} \cong M \oplus L_{i-1}$ and $\Ext^j_R(K_{i-1},M)=0$ for all $j\geq 1$, we conclude that $\Ext^1_R(L_{i-1},M)=0$.
Furthermore, since $\Ext^1(\Omega^{n_i}K_{i-1}^{\oplus b_i},M) \cong \Ext^{n_i+1}(K_{i-1},M)^{\oplus b_i}=0$, the bottom horizontal short exact sequence in the above diagram implies $\Ext^1_R(L_i',M)=0$.
This also implies that the middle vertical short exact sequence of the above diagram splits. So, $K_i \cong M^{\oplus a_i} \oplus L'_i$, and hence, by setting $L_i=M^{\oplus a_i-1} \oplus L_i'$, we see $K_i \cong M \oplus L_i$. This completes the proof.
\end{proof}

We can now establish the generalization we seek concerning the canonical module:

\begin{cor} \label{面白い} Let $C$ be a semidualizing $R$-module. Then the following conditions are equivalent:
\begin{enumerate}[\rm(i)]
\item $C \cong R$.
\item $C$ is totally reflexive.
\item $\G(C)<\infty$.
\item $\rGdim(C)<\infty$.
\end{enumerate}
In particular, if $R$ is Cohen-Macaulay with canonical module $\omega$, then $R$ is Gorenstein if and only if $\rGdim(\omega)<\infty$ if and only if $\G(\omega)<\infty$.
\end{cor}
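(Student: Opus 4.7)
The plan is to prove the cycle (i) $\Rightarrow$ (ii) $\Rightarrow$ (iii) $\Rightarrow$ (iv) $\Rightarrow$ (i). The first three implications are immediate from definitions: $R$ is tautologically totally reflexive, a totally reflexive module has $\G=0$, and when $\G(C)<\infty$ the trivial singleton $\{C\}$ is a reducing $\G$-sequence of length $0$ (the final line of Definition \ref{maindfn}). So the only substantive step is (iv) $\Rightarrow$ (i), which I would split into (iv) $\Rightarrow$ (iii) and (iii) $\Rightarrow$ (i).

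For (iv) $\Rightarrow$ (iii), the hypothesis that $C$ is semidualizing gives $\Ext_R^i(C,C)=0$ for every $i\geq 1$, and $\G$ is well known to be closed under direct summands. With $\Hdim=\G$ and $M=C$, Theorem \ref{t2} then converts the assumed reducing $\G$-sequence $\{K_0,\dots,K_r\}$ of $C$ into the information that $C$ is a direct summand of $K_r$; since $\G(K_r)<\infty$ and $\G$ is closed under summands, this forces $\G(C)<\infty$. This is precisely the situation for which Theorem \ref{t2} was designed, so the implication is really a formal corollary.

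For (iii) $\Rightarrow$ (i), I would first observe that semidualizing modules satisfy $\depth_R(C)=\depth(R)$ (an immediate consequence of $R\cong\Hom_R(C,C)$ together with $\Ext_R^{\geq 1}(C,C)=0$, via the standard depth formula for $\Hom$), so that once $\G(C)<\infty$ the Auslander-Bridger formula forces $\G(C)=0$ and $C$ is totally reflexive. The remaining classical fact that a totally reflexive semidualizing module is isomorphic to $R$ I would invoke from \cite{SSC}. The concluding statement about the canonical module $\omega$ then follows at once by specializing $C$ to $\omega$ and recalling that $R\cong\omega$ characterizes the Gorenstein property for a Cohen-Macaulay local ring with canonical module. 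The main obstacle is precisely this last classical upgrade $\G(C)<\infty\Rightarrow C\cong R$; the reducing-to-classical step is handled cleanly by Theorem \ref{t2}, while the extraction of $C\cong R$ from $\G(C)<\infty$ is a genuine input from the theory of semidualizing modules rather than something produced by the methods of the present paper.
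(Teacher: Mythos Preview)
Your proposal is correct and follows essentially the same approach as the paper: both isolate (iv) $\Rightarrow$ (i) as the only nontrivial implication, invoke Theorem~\ref{t2} (with the vanishing $\Ext^i_R(C,C)=0$ from the semidualizing hypothesis and the closure of $\G$ under summands) to obtain $\G(C)<\infty$, and then appeal to the classical theory of semidualizing modules for the upgrade to $C\cong R$. The only cosmetic difference is in this last classical step: the paper cites \cite[2.2.3]{Chr} (or \cite[2.7]{Yassemi}) to see that $C$ is a reflexive complex and then \cite[5.3]{ATY2005} to conclude $C\cong R$, whereas you go through the Auslander--Bridger formula to get $\G(C)=0$ and then cite \cite{SSC}; these are interchangeable routes to the same standard fact.
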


\begin{proof} It suffices to prove (iv) implies (i). Hence we assume $\rGdim(C)<\infty$.  As $\G$ is closed under direct summands \cite[1.1.10(c)]{Chr} and since $\Ext^i_R(C,C)=0$ for all $i\geq 1$, Theorem \ref{t2} implies that $\G(C)<\infty$. In particular, $C$ is a reflexive complex; see \cite[2.2.3]{Chr} or \cite[2.7]{Yassemi}. Therefore \cite[5.3]{ATY2005} shows that $C\cong R$, and this establishes the assertion.
 
The second claim follows from \cite[1.2]{JLG} since $\omega$ is a semidualizing module.
\end{proof}

In passing, it is worth recalling that the conclusion of Corollary \ref{面白い} is not true if the canonical module is replaced by the residue field of the ring. In other words, in general, one does not have a characterization of regularity of local rings in terms of the reducing projective, or reducing Gorenstein dimension of the residue fields; see Example \ref{ex1}. 
\section{Proof of the main result}

The aim of this section is to prove our main result, namely Theorem \ref{mainthmintro}. Our proof relies upon the following proposition; its proof is deferred and is given in Section 5.

\begin{prop} \label{prop2} Let $M$ be an $R$-module such that $\rGdim(M)<\infty$. Then,
\begin{enumerate}[\rm(i)]
\item $\rGdim(\Omega M)<\infty$.\\
Assume further $\Ext_{R}^i(M,R)=0$ for all $i\geq 1$. Then the following hold:
\item If $M \cong \Omega N$ for some $R$-module $N$, then $\rGdim(N)<\infty$ 
\item If $K$ is an element of a reducing $\G$-sequence of $M$, then $\Ext_{R}^i(K,R)=0$ for all $i\geq 1$.
\qedhere
\popQED	
\end{enumerate}
\end{prop}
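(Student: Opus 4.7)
The plan is to establish (iii) first, then use it to prove (i) and (ii) by suitably transforming the given reducing $\G$-sequence $\{K_0=M,K_1,\ldots,K_r\}$ of $M$. At every step, free summands are absorbed thanks to the hypothesis $n_i\geq 1$, which kills free modules when $\Omega^{n_i}$ is applied.

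For part (iii), I would induct on $i$. The base case $i=0$ is exactly the hypothesis. For the inductive step, I would apply $\Hom_R(-,R)$ to the defining short exact sequence $0\to K_{i-1}^{\oplus a_i}\to K_i\to \Omega^{n_i}K_{i-1}^{\oplus b_i}\to 0$. The identity $\Ext^j_R(\Omega^{n_i}K_{i-1},R)\cong \Ext^{j+n_i}_R(K_{i-1},R)$ together with the inductive vanishing (and $n_i\geq 1$) makes the outer terms of the resulting long exact sequence vanish, sandwiching $\Ext^j_R(K_i,R)=0$ for $j\geq 1$.

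For part (i), apply the horseshoe lemma to each short exact sequence to obtain, up to a free summand $F_i$, a short exact sequence of first syzygies $0\to \Omega K_{i-1}^{\oplus a_i}\to \Omega K_i\oplus F_i\to \Omega^{n_i+1}K_{i-1}^{\oplus b_i}\to 0$. Define $L_0=\Omega M$ and iteratively $L_i=\Omega K_i$ padded by free summands accumulated from previous steps, so that the submodule appearing at step $i$ is genuinely $L_{i-1}^{\oplus a_i}$ and the quotient is $\Omega^{n_i}L_{i-1}^{\oplus b_i}$; here $n_i\geq 1$ is used to kill the accumulated frees under $\Omega^{n_i}$. The tail condition $\G(L_r)<\infty$ follows from $\G(\Omega K_r)<\infty$.

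Part (ii) is the main obstacle and requires inverting the syzygy operator on each $K_i$. Since $M\cong\Omega N$, set $L_0=N$, so that $\Omega L_0\cong K_0$ modulo frees. Assuming $L_{i-1}$ with $\Omega L_{i-1}\cong K_{i-1}$ modulo frees has been constructed, the idea is to lift the Ext-class of the defining $K_i$-sequence along the natural map $\Omega_\ast\colon \Ext^1_R(\Omega^{n_i}L_{i-1}^{\oplus b_i},L_{i-1}^{\oplus a_i})\to \Ext^1_R(\Omega^{n_i+1}L_{i-1}^{\oplus b_i},\Omega L_{i-1}^{\oplus a_i})\cong \Ext^1_R(\Omega^{n_i}K_{i-1}^{\oplus b_i},K_{i-1}^{\oplus a_i})$ induced by horseshoe. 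The key point is that $\Omega_\ast$ is bijective, which follows by factoring it through two connecting isomorphisms in the long exact sequences coming from $0\to \Omega X\to F\to X\to 0$ and $0\to \Omega Y\to F'\to Y\to 0$; the outer terms vanish by the Ext-vanishing $\Ext^k_R(\Omega^{n_i}L_{i-1},R)\cong\Ext^{k+n_i-1}_R(K_{i-1},R)=0$ for $k\geq 1$, which uses part (iii) and $n_i\geq 1$. Taking the unique preimage of the class of $K_i$ defines $L_i$ via the extension $0\to L_{i-1}^{\oplus a_i}\to L_i\to \Omega^{n_i}L_{i-1}^{\oplus b_i}\to 0$, and $\Omega L_i\cong K_i$ modulo frees. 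Iterating yields $\{L_0=N,L_1,\ldots,L_r\}$, and $\G(L_r)\leq \G(K_r)+1<\infty$ because $K_r\cong \Omega L_r$ modulo frees, so $\rGdim(N)<\infty$.

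The hardest part is the careful bookkeeping of free summands when iterating horseshoe in both (i) and (ii), and justifying the isomorphism $\Omega_\ast$ in (ii); the hypothesis $n_i\geq 1$ is essential throughout, and the Ext-vanishing supplied by (iii) is the indispensable input that makes un-syzygy-ing possible.
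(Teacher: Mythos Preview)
Your outline is correct and matches the paper's strategy closely: part (iii) is the paper's Lemma~5.1, part (i) is the forward direction of 5.3 via horseshoe plus free padding, and part (ii) is the backward direction of 5.3 via lifting along the syzygy map on $\Ext^1$, with the Ext-vanishing from (iii) providing the needed input.

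One small difference is worth noting. In (ii) the paper lifts along $\Omega_\ast:\Ext^1_R(\Omega^{n_i-1}K_{i-1}^{\oplus b_i},W_{i-1}^{\oplus a_i})\to \Ext^1_R(\Omega^{n_i}K_{i-1}^{\oplus b_i},\Omega W_{i-1}^{\oplus a_i})$, using only \emph{surjectivity} (which needs just $\Ext^2_R(\Omega^{n_i-1}K_{i-1},R)=0$); because the first variable is $\Omega^{n_i-1}K_{i-1}$, the paper must then split into the cases $n_i=1$ and $n_i\geq 2$ to strip the lingering free summand $F_{i-1}^{\oplus b_i}$ from the quotient via a pullback. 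Your choice of domain $\Ext^1_R(\Omega^{n_i}L_{i-1}^{\oplus b_i},L_{i-1}^{\oplus a_i})$ avoids this case split, since $\Omega^{n_i}L_{i-1}$ already carries no spurious free summand. Your bijectivity claim is in fact correct here (both $\Ext^1$ and $\Ext^2$ of $\Omega^{n_i}L_{i-1}$ into $R$ vanish), though only surjectivity is needed. Two minor imprecisions to clean up: for $n_i=1$ the displayed isomorphism $\Ext^k_R(\Omega^{n_i}L_{i-1},R)\cong\Ext^{k+n_i-1}_R(K_{i-1},R)$ is only ``direct summand of,'' which still suffices; and the identification $\Ext^1(\Omega^{n_i+1}L_{i-1}^{\oplus b_i},\Omega L_{i-1}^{\oplus a_i})\cong\Ext^1(\Omega^{n_i}K_{i-1}^{\oplus b_i},K_{i-1}^{\oplus a_i})$ implicitly uses that $\Ext^1_R(\Omega^{n_i}K_{i-1},F_{i-1})=0$ and that the resulting pushout sequence $0\to F_{i-1}^{\oplus a_i}\to K_i\to K_i'\to 0$ splits---this is exactly the paper's pushout diagram and should be made explicit when you write out the details.
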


In order to prove Theorem \ref{mainthmintro}, we first recall:

\begin{rmk} \label{PF} Let $X$ be a torsionless $R$-module, i.e., $\lambda_{X}$ is injective, where $\lambda_{X}$ denotes the natural map $X \longrightarrow X^{\ast\ast}$. Consider a minimal free cover $F \stackrel{\pi} {\twoheadrightarrow} X^{\ast}$. By dualizing the map $\pi$, we obtain a short exact sequence $0 \to X \to F \to X_1 \to 0$, where $X_1$ is the cokernel of the composite map $X \stackrel{\lambda_X} {\longrightarrow} X^{\ast\ast} \stackrel{\pi^{\ast}} {\longrightarrow} F^{\ast} \cong F$. The short exact sequence $0 \to X \to F \to X_1 \to 0$ obtained in this way is called a \emph{pushforward} of $X$. Note the pushforward of $X$ is, up to isomorphism, unique and it follows, by construction, that $\Ext^1_R(X_1,R)=0$; see, for example, \cite[page 61, exercise 3(c)]{EG}. \pushQED{\qed} 
\qedhere
\popQED	
\end{rmk}

We are now ready to prove Theorem \ref{mainthmintro}:

\begin{proof}[Proof of Theorem \ref{mainthmintro}] We assume $\Ext^{i}_R(M,R)=0$ for all $i\gg 0$. To establish the theorem, it suffices to prove $\G(M)<\infty$; see \cite[23(c)]{Vla}. Note that, in view of Proposition \ref{prop2}(i), we may replace $M$ with a high syzygy module, assume $\Ext^{i}_R(M,R)=0$ for all $i\geq 1$, and proceed to prove $M$ is totally reflexive. For that, first, we will first show that $M$ is torsionless.

Set $r=\rGdim(M)$, and proceed by induction on $r$. If $r=0$, then, by Definition \ref{maindfn}, we have that $\G(M)<\infty$, and so $M$ is totally reflexive as $\Ext^{i}_R(M,R)=0$ for all $i\geq 1$. Thus we assume $r\geq 1$. Let $\{ K_0,K_1,\dots,K_r \}$ be a reducing $\G$ sequence of $M$. It then follows that $\rGdim(K_1)=r-1$ and $\Ext^i_R(K_1,R)=0$ for all $i\geq 1$; see Proposition \ref{prop2}(iii). So $K_1$ is torsionless by the induction hypothesis. 

It follows from Definition \ref{maindfn} that there is an exact sequence $0 \to M^{\oplus a} \to K_1 \to \Omega^nM^{\oplus b} \to 0$ for some positive integers $a$, $b$ and $n$. This implies that the dual sequence $0 \to (\Omega^nM^{\oplus b})^{\ast} \to K_1^{\ast} \to (M^{\oplus a})^{\ast} \to 0$ is also exact because $\Ext^1_R(\Omega^nM,R)=0$. Dualizing one more time, we obtain the following commutative diagram with exact rows, where $\lambda$ denotes the natural map:
\begin{center}
$\xymatrix{0 \ar[r] & M^{\oplus a} \ar[r] \ar[d]^{\lambda_{M^{\oplus a}}} & K_1 \ar[r] \ar[d]^{\lambda_{K_1}} & \Omega^nM^{\oplus b} \ar[r] \ar[d]^{\lambda_{\Omega^nM^{\oplus b}}} & 0\\
0 \ar[r] & (M^{\oplus a})^{\ast\ast} \ar[r] & K_1^{\ast\ast} \ar[r] & (\Omega^nM^{\oplus b})^{\ast\ast}}$
\end{center}
As $\lambda_{K_1}$ is injective, we see by the Snake lemma that $\lambda_{M^{\oplus a}}$ is injective. Since $\lambda_{M^{\oplus a}}=(\lambda_M)^{\oplus a}$, we conclude that the map $\lambda_M$ is injective, i.e., $M$ is torsionless, as claimed.

Notice, what we have proved above is that, if $X$ is an $R$-module with $\Ext^i_R(X,R)=0$ for all $i\geq 1$ and $\rGdim(X)<\infty$, then $X$ is torsionless.

Next we consider the pushforward of $M$, i.e., an exact sequence of $R$-modules $0 \to M \to F^{1} \to M_1 \to 0$, where $F^{1}$ is free; see Remark \ref{PF}. It then follows $\Ext^{i}_R(M_1,R)=0$ for all $i\geq 1$, and $\rGdim(M_1)<\infty$; see Proposition \ref{prop2}(ii). Thus, by what we have established above, we deduce that $M_1$ is torsionless. Moreover, the dual sequence $0 \to (M_1)^{\ast} \to (F^{1})^{\ast} \to M^{\ast} \to 0$ is exact. 

As $M_1$ is torsionless, we can iterate the previous process, and in this way we obtain exact sequences of $R$-modules $0 \to M_{i-1} \to F^{i} \to M_i \to 0$, where $F^{i}$ is free and $\Ext^{j}_R(M_i,R)=0$ for each $i, j\geq 1$. In particular, the dual sequence $0 \to (M_i)^{\ast} \to (F^i)^{\ast} \to (M_{i-1})^{\ast} \to 0$ is exact for each $i\geq 1$ (Set $M_{0}=M$). 
This gives us the following long exact sequence whose dual is also exact:

$$
\xymatrix{
0 \ar[r] & M \ar[r] & F^1 \ar[rr]\ar[rd] & & F^2 \ar[rr]\ar[rd] & & F^3 \ar[r] & \cdots \\
 &  & & M_1 \ar[ru]\ar[rd] & & M_2 \ar[ru]\ar[rd] & \\
 & & 0 \ar[ru] & & 0 \ar[ru] & & 0}
$$
\vspace{0.1in}

Now let $\cdots \to F_2 \to F_1 \to F_0 \to M \to 0$ be a free resolution of $M$. Splicing this free resolution with the one above, i.e., with the exact sequence $0 \to M \to F^1 \to F^2 \to F^3  \to \cdots$, we obtain a complete resolution of $M$ as follows:
$$
\xymatrix{
\cdots \ar[r] & F_1 \ar[r] & F_0\ar[rr]\ar[rd] & & F^1 \ar[r]  & F^2 \ar[r] & \cdots \\
 &  & & M \ar[ru]\ar[rd] & &   & \\
 & & 0 \ar[ru] & & 0  & & }
$$
\vspace{0.01in}

\noindent Therefore $M$ is totally reflexive and hence this completes the proof of the theorem.
\end{proof}

\section{Proof of Proposition \ref{prop2}}

This section is devoted to a proof of Proposition \ref{prop2}. Our proof requires several steps. Let us note here that Proposition \ref{prop2}(iii) follows from \ref{lemPMR}, while \ref{t1Main} establishes parts (i) and (ii) of the proposition.

\begin{chunk} \label{lemPMR}
Let $M$ and $N$ be $R$-modules such that $\rHdim(M)<\infty$ and $\PPP(M,N)=\sup\{i\in \ZZ:\Ext_R^i(M,N)\not=0\}<\infty$. If $\{K_0, K_1, \cdots, K_r\}$ is a reducing $\Hdim$-sequence of $M$, then $\PPP(K_i, N)=\PPP(M,N)$  for each $i=0, \ldots, r$.
\end{chunk}

\begin{proof} Let $\{K_0, K_1, \cdots, K_r\}$ be a reducing $\Hdim$-sequence of $M$. It follows from Definition \ref{maindfn} that, given an integer $i$ with $1\leq i \leq r$, there exists a short exact sequence of $R$-modules:
\begin{equation} \tag{\ref{lemPMR}.1}
0 \to K_{i-1}^{{\oplus a_i}} \to K_{i} \to \Omega^{n_i}K_{i-1}^{{\oplus b_i}} \to 0,
\end{equation}
where $a_1, \dots, a_r,b_1, \dots, b_r ,n_1, \dots, n_r$ are positive integers, $K_0=M$ and $\Hdim(K_r)<\infty$.

We will first observe, for each $i=1, \ldots, r$, that $\PPP(K_i, N)<\infty$, and then $\PPP(K_{i-1}, N)=\PPP(K_i, N)$. To establish both of these claims, we will proceed by induction on $i$.

If $i=0$, then $\PPP(K_i, N)=\PPP(M,N)$, which is finite by assumption. So we assume $i\geq 1$. Since $\Ext_R^j(\Omega^{n_i}K_{i-1},N) \cong \Ext_R^{j+n_i}(K_{i-1},N)$ for each $j\geq 1$, and since the induction hypothesis gives $\PPP(K_{i-1}, N)<\infty$, we conclude that $\Ext_R^{v}(K_{i-1},N)=0$ for all $v\gg 0$, i.e., $\PPP(K_i, N)<\infty$. Hence it remains to show $\PPP(K_{i-1}, N)=\PPP(K_i, N)$ for each $i=1, \ldots, r$. 

We pick an integer $i$ with $1\leq i\leq r$, set $p=\PPP(K_{i-1},N)$, and consider the following long exact sequence which follows from (\ref{lemPMR}.1): 
\begin{equation} \tag{\ref{lemPMR}.2}
\cdots \to \Ext_R^{j}(\Omega^{n_i} K_{i-1}^{{\oplus b_i}}, N) \to \Ext_R^{j}(K_i,N) \to \Ext_R^{j}(K_{i-1}^{{\oplus a_i}},N) \to \Ext_R^{j+1}(\Omega^{n_i} K_{i-1}^{{\oplus b_i}}, N) \to \cdots
\end{equation}
Letting $j=p$ in (\ref{lemPMR}.2), we see that $\Ext_R^{p}(K_{i},N) \cong \Ext_R^{p}(K_{i-1}^{{\oplus a_i}},N)\neq 0$. Furthermore, if $j\geq p+1$, then $\Ext_R^{j}(\Omega^{n_i} K_{i-1}^{{\oplus b_i}}, N)=0=\Ext_R^{j}(K_{i-1}^{{\oplus a_i}},N)$ so that $\Ext_R^{j}(K_i,N) =0$.
This shows $\PPP(K_{i-1},N)=\PPP(K_i,N)$. Therefore we conclude that $\PPP(M,N)=\PPP(K_{0},N)=\cdots=\PPP(K_r,N)<\infty$. This completes the proof.
\end{proof}

In our proof of \ref{t1Main}, we will make use of the next result; it is an application of the Horseshoe Lemma and hence we skip its proof.

\begin{chunk} \label{HSL} Let $M$, $N$ and $L$ are $R$-modules.
\begin{enumerate}[\rm(i)]
\item If $\theta=(0 \to N \to L \to M \to 0)$ is an exact sequence, then there exists a free $R$-module $F$ such that the syzygy sequence $\Omega \theta=(0 \to \Omega N \to \Omega L \oplus F \to \Omega M \to 0)$ is exact.
\item Let $f_{M}^{N}:\Ext^1_R(M,N) \to \Ext^1_R(\Omega M, \Omega N)$ be the map given by $f_{M}^{N}(\theta)=\Omega \theta$, where $\Omega \theta$ is defined as in part (i). If $\Ext_R^2(M,R)=0$, then $f_{M}^{N}$ is surjective. 
\pushQED{\qed} 
\qedhere
\popQED	
\end{enumerate}
\end{chunk}

\begin{chunk} \label{t1Main} Let $M$ and $N$ be $R$-modules such that $M \cong \Omega N$. Assume $\Hdim$ satisfies the following properties:
\begin{enumerate}[\rm (1)]
\item $\Hdim(R)<\infty$.
\item $\Hdim(X)<\infty \iff \Hdim(\Omega X)<\infty$ for each $R$-module $X$.
\item $\Hdim(X)<\infty \iff \Hdim(X\oplus R)<\infty$ for each $R$-module $X$. 
\end{enumerate}
Then the following hold:
\begin{enumerate}[\rm(i)]
\item If $\rHdim(N)<\infty$, then $\rHdim(M)<\infty$.
\item If $\rHdim(M)<\infty$ and $\Ext_R^{i}(M,R)=0$ for all $i\geq 1$, then $\rHdim(N)<\infty$.
\end{enumerate}
In particular, the result holds for the case where $\Hdim=\G$, i.e., if $\rGdim(N)<\infty$, then $\rGdim(M)<\infty$, and if $\rGdim(M)<\infty$ and $\Ext_R^{i}(M,R)=0$ for all $i\geq 1$, then $\rGdim(N)<\infty$.
\end{chunk}

\begin{proof}
(i) Assume $\rHdim(N)<\infty$ and let $\{ K_0,\dots,K_r\}$ be a reducing $\Hdim$-sequence of $N$. Then there are positive integers $a_1,\dots,a_r,b_1,\dots,b_r,n_1,\dots,n_r$, and exact sequences
\begin{equation} \tag{\ref{t1Main}.1}
0 \to K_{i-1}^{\oplus a_i} \to K_i \to \Omega^{n_i}K_{i-1}^{\oplus b_i} \to 0,
\end{equation}
where $1\leq i \leq r$, $K_{0}=N$ and $\Hdim(K_r)<\infty$; see Definition \ref{maindfn}. We will prove that there is a reducing $\Hdim$-sequence $\{Z_0,\dots, Z_r\}$ of $M$, where $Z_i \cong \Omega K_i \oplus F_i$ for some free $R$-module $F_i$ and for each $i=0, \ldots r$.

We set $Z_0=M$ and $F_0=0$ so that, for the case where $i=0$, we have $Z_0 \cong \Omega K_0 \oplus F_0 = \Omega N \oplus 0 \cong M$ by assumption. Hence suppose $i\geq 1$. Then, by the induction hypothesis, there are $R$-modules $Z_{i-1}$ and $F_{i-1}$ with $F_{i-1}$ is free and $Z_{i-1} \cong \Omega K_{i-1} \oplus F_{i-1}$.

We use \ref{HSL}(i) with the sequence (\ref{t1Main}.1), and obtain free $R$-modules $G_i$ and the exact sequence:
\begin{equation} \tag{\ref{t1Main}.2}
0 \to \Omega K_{i-1}^{\oplus a_i} \overset{\varphi}{\longrightarrow} \Omega K_i\oplus G_i \to \Omega^{n_i+1}K_{i-1}^{\oplus b_i} \to 0.
\end{equation}

Next we take the direct sum of the sequence (\ref{t1Main}.2) with the trivial sequence $0 \to F_{i-1}^{\oplus a_i} {\to} F_{i-1}^{\oplus a_i} \to 0$, and obtain the exact sequence:
\begin{equation} \tag{\ref{t1Main}.3}
0 \to \Omega K_{i-1}^{\oplus a_i} \oplus F_{i-1}^{\oplus a_i} \overset{\left(\begin{smallmatrix}
\varphi \\ 1
\end{smallmatrix}\right)}{\longrightarrow} (\Omega K_i \oplus G_i) \oplus F_{i-1}^{\oplus a_i} \to \Omega^{n_i+1}K_{i-1}^{\oplus b_i} \to 0.
\end{equation}

As $n_i$ is positive, the exact sequence (\ref{t1Main}.3) can be written as follows:
\begin{equation} \tag{\ref{t1Main}.4}
0 \to \big(\Omega K_{i-1} \oplus F_{i-1}\big)^{\oplus a_i}         
\overset{\left(\begin{smallmatrix} \varphi \\ 1\end{smallmatrix}\right)}{\longrightarrow} 
\Omega K_i \oplus \big(G_i \oplus F_{i-1}^{\oplus a_i}\big) \to\Omega^{n_i}\big(\Omega K_{i-1} \oplus F_{i-1}\big)^{\oplus b_i}  \to 0.
\end{equation}

Now we set $F_i=G_i \oplus F_{i-1}^{\oplus a_i}$ and $Z_i=\Omega K_i \oplus F_i$. Then, by making use of (\ref{t1Main}.4), for each $i=1, \ldots, r$, we obtain the following exact sequences:
\begin{equation} \tag{\ref{t1Main}.5}
0 \to Z_{i-1}^{\oplus a_i} \to Z_i \to \Omega^{n_i}Z_{i-1}^{\oplus b_i} \to 0.
\end{equation}

Note that $\Hdim(K_r)<\infty \Longrightarrow \Hdim(\Omega K_r)<\infty \Longrightarrow \Hdim(\Omega K_r \oplus F_r)<\infty \Longrightarrow \Hdim(Z_r)<\infty$: the first implication follows from (2), while the second one is due to (3) of the hypotheses. As $Z_0=M$, (\ref{t1Main}.5) shows that $\{Z_0,\dots, Z_r\}$ is an $\Hdim$-sequence of $M$. So $\rHdim(M)<\infty$, as required. This justifies part (i).

(ii) Assume $\rHdim(M)<\infty$ and $\Ext_R^{i}(M,R)=0$ for all $i\geq 1$. Let $\{ K_0,\dots,K_r\}$ be a reducing $\Hdim$-sequence of $M$. Then there are positive integers $a_1,\dots,a_r,b_1,\dots,b_r,n_1,\dots,n_r$, and exact sequences
\begin{equation} \tag{\ref{t1Main}.6}
0 \to K_{i-1}^{\oplus a_i} \to K_i \to \Omega^{n_i}K_{i-1}^{\oplus b_i} \to 0,
\end{equation}
where $1\leq i \leq r$, $K_{0}=M$ and $\Hdim(K_r)<\infty$; see Definition \ref{maindfn}. Note that, $\Ext^j_R(K_i,R)=0$ for all $j\geq 1$ and for all $i=0, \ldots r$; see \ref{lemPMR}.

We will prove that there is a reducing $\Hdim$-sequence $\{W_0,\dots, W_r\}$ of $N$, where $K_i \cong \Omega W_i \oplus F_i$ for some free $R$-module $F_i$ and for each $i=1, \ldots r$.

We set $W_0=N$ and $F_0=0$ so that, for the case where $i=0$, we have $K_0 \cong \Omega W_0 \oplus F_0 = \Omega N \oplus 0 \cong M$ by assumption. Hence suppose $i\geq 1$. Then, by the induction hypothesis, there are $R$-modules $W_{i-1}$ and $F_{i-1}$ with $F_{i-1}$ is free and $K_{i-1} \cong \Omega W_{i-1} \oplus F_{i-1}$. Note this isomorphism gives a split epimorphism $K_{i-1}^{\oplus a_i} \to \Omega W_{i-1}^{\oplus a_i}$. Next we consider a pushout of this split epimorphism with the map $K_{i-1}^{\oplus a_i} \to K_i$ that comes from (\ref{t1Main}.6):
$$\xymatrix{
& 0 \ar[d] & 0 \ar[d] \\
& F_{i-1}^{\oplus a_i}\ar[d] \ar@{=}[r] & F_{i-1}^{\oplus a_i} \ar[d] \\
 0 \ar[r] & K_{i-1}^{\oplus a_i} 
\ar[d]^{} \ar[r]^{} & K_i
\ar[d]^{} \ar[r]^{} &
\Omega^{n_i}K_{i-1}^{\oplus b_i} \ar@{=}[d] \ar[r] & 0 \\
0 \ar[r] & \Omega W_{i-1}^{\oplus a_i} \ar[r] \ar[d] & \exists \; L_i \ar[r] \ar[d] &
\Omega^{n_i}K_{i-1}^{\oplus b_i} \ar[r] & 0 \\
& 0 & 0 }$$
\vspace{0.01in}

We have $\Ext^1_R(\Omega W_{i-1}^{\oplus a_i},R)=0=\Ext^1_R(\Omega^{n_i}K_{i-1}^{\oplus b_i},R)$ since $\Ext^{j}(K_{i-1},R)=0$ for all $j\geq 1$ and since $\Omega W_{i-1}$ is a direct summand of $K_{i-1}$. Therefore, by the bottom horizontal short exact sequence above, we conclude $\Ext^1_R(L_i,R)=0$. This shows, since $F_{i-1}$ is free, that the middle vartical exact sequence splits. Consequently, we obtain the isomorphism $K_i \cong L_i \oplus F_{i-1}^{\oplus a_i}$.

As $\Ext^{j}(K_{i-1}, R)=0$ for all $j\geq 1$, we have $\Ext^2_R(\Omega^{n_i-1}K_{i-1}^{\oplus b_i},R)=0$. Therefore the map $\Ext^1_R(\Omega^{n_i-1}K_{i-1}^{\oplus b_i},W_{i-1}^{\oplus a_i}) \to \Ext^1_R(\Omega^{n_i}K_{i-1}^{\oplus b_i},\Omega W_{i-1}^{\oplus a_i})$, given by taking syzygy, is surjective; see \ref{HSL} (ii). In particular, there exists $\theta= \big( 0 \to W_{i-1}^{\oplus a_i} \to P_i \to 
\Omega^{n_i-1}K_{i-1}^{\oplus b_i} \to 0 \big) \in \Ext^1_R(\Omega^{n_i-1}K_{i-1}^{\oplus b_i},W_{i-1}^{\oplus a_i})$ whose syzygy is the short exact sequence $\big(0 \to \Omega W_{i-1}^{\oplus a_i}  \to L_i \to \Omega^{n_i}K_{i-1}^{\oplus b_i} \to 0 \big) \in \Ext^1_R(\Omega^{n_i}K_{i-1}^{\oplus b_i},\Omega W_{i-1}^{\oplus a_i})$. It follows, by the definition of the syzygy map, that there exists a free $R$-module $G_i$ with $L_i \cong \Omega P_i \oplus G_i$.

In passing, we summarize the isomorphisms we obtained so far:
\begin{equation} \tag{\ref{t1Main}.7}
K_{i-1} \cong \Omega W_{i-1} \oplus F_{i-1}  \text{, and } K_i \cong L_i \oplus F_{i-1}^{\oplus a_i} \text{, and } L_i \cong \Omega P_i \oplus G_i \text{, where } F_{i-1}, F_i \text{ and } G_i \text{ are free}.
\end{equation}
Now we consider two cases:\\
\emph{Case 1}: Assume $n_i=1$. In this case, notice $\theta= \big( 0 \to W_{i-1}^{\oplus a_i} \to P_i \to K_{i-1}^{\oplus b_i} \to 0 \big)$. As the isomorphism $K_{i-1} \cong \Omega W_{i-1} \oplus F_{i-1}$ yields
the isomorphism $K_{i-1}^{\oplus b_i} \cong \Omega W_{i-1}^{\oplus b_i} \oplus F_{i-1}^{\oplus b_i}$, we can consider the pullback of the split monomorphism $\Omega W_{i-1}^{\oplus b_i} \to K_{i-1}^{\oplus b_i}$ and the map
$P_i \to K_{i-1}^{\oplus b_i}$ that comes from $\theta$: 

$$\xymatrix{
 & & 0 \ar[d] & 0 \ar[d] \\
0 \ar[r]& W_{i-1}^{\oplus a_i} \ar@{=}[d] \ar[r] & \exists \; W_i \ar[d] \ar[r] & \Omega W_{i-1}^{\oplus b_i} \ar[d] \ar[r] & 0 \\
0 \ar[r]& W_{i-1}^{\oplus a_i} \ar[r] & P_i \ar[d] \ar[r]^{} & K_{i-1}^{\oplus b_i} \ar[d] \ar[r] & 0 \\
& & F_{i-1}^{\oplus b_i} \ar[d] \ar@{=}[r] & F_{i-1}^{\oplus b_i} \ar[d] \\
& & 0 & 0
}$$
\vspace{0.01in}

\noindent As $F_{i-1}$ is free, it follows that $P_i \cong W_i \oplus F_{i-1}^{\oplus b_i}$, and hence $\Omega P_i \cong \Omega W_i$.

\noindent \emph{Case 2}: Assume $n_i\geq 2$. Then ${n_i}-1>0$ and, since $K_{i-1} \cong \Omega W_{i-1} \oplus F_{i-1}$, it follows that:
\begin{equation} \tag{\ref{t1Main}.8}
\Omega^{n_i-1}K_{i-1}^{\oplus b_i} \cong \Omega^{n_i-1}(\Omega W_i \oplus F_{i-1})^{\oplus b_i}  \cong \Omega^{n_i}W_{i-1}^{\oplus b_i}. 
\end{equation}
In this case we define $W_i$ as $P_i$, i.e., we set $W_i=P_i$.

In both Case 1 and 2, we have the exact sequence $0 \to W_{i-1}^{\oplus a_i} \to W_i \to \Omega^{n_i} W_{i-1}^{\oplus b_i} \to 0$: for Case 1, this is obtained in the previous pullback diagram, and for Case 2, this follows from the definition of $\theta$, the fact that $W_i=P_i$, and the isomorphism (\ref{t1Main}.8). Furthermore, for both cases, we have the following isomorphism; see also (\ref{t1Main}.7).
\begin{equation} \tag{\ref{t1Main}.9}
K_i \cong L_i \oplus F_{i-1}^{\oplus a_i} \cong (\Omega P_i \oplus G_i) \oplus F_{i-1}^{\oplus a_i} \cong \Omega W_i  \oplus F_i \text{, where } F_i=G_i \oplus F_{i-1}^{\oplus a_i}.
\end{equation}
The isomorphism (\ref{t1Main}.9), in particular, implies that $\Hdim(K_r)=\Hdim(\Omega W_r \oplus F_r)<\infty$. Thus, from the hypotheses (2) and (3), we conclude that $\Hdim(W_r)<\infty$. Consequently,  
$\{W_0,\dots, W_r\}$ is an $\Hdim$-sequence of $N$, and hence it follows that $\Hdim(N)<\infty$. This completes the proof of the theorem.
\end{proof}

We finish this section by recording the following observation about Theorem \ref{t1Main}:

\begin{rmk} Let $R=k[\![x,y]\!]/(x,y)^2$ for some field $k$, and let $N=R/(x)$. Then $M = \Omega N = (x) \cong k$. Hence it follows from Proposition \ref{prop1} that $\rGdim(M)<\infty$ and $\rGdim(N)=\infty$
(since $M\cong k$ and since $N$ is not isomorphic to $R^{\oplus \alpha} \oplus k^{\oplus \beta}$ for all $\alpha, \beta \geq 0$). Note that $\Ext^i_R(M,R)\neq 0$ for each $i\geq 0$.
This shows, in general, the converse of Theorem \ref{t1Main}(i) is not true, and that the vanishing of $\Ext^i_R(M,R)$ assumption is necessary for Theorem \ref{t1Main}(ii) to hold.
\end{rmk}

\section*{Acknowledgements}
The authors are grateful to Hiroki Matsui and Yuji Yoshino for explaining Example \ref{withoutrmkEx2} to them. The authors are indebted to Mohsen Asgharzadeh and Hiroki Matsui for reading the manuscript and giving helpful comments and suggestions.

\end{document}